\newtheorem{thm}{Theorem}
\newtheorem*{rem}{Remark}
\newtheorem{cor}[thm]{Corollary}
\newtheorem{conj}[thm]{Conjecture}
\newtheorem{prop}[thm]{Proposition}
\newcommand{\leg}[2]{\genfrac{(}{)}{}{}{#1}{#2}} 
\newcommand{\ZZ}{\mathbb{Z}}
\newcommand{\NN}{\mathbb{N}}
\newcommand{\QQ}{\mathbb{Q}}
\newcommand{\RR}{\mathbb{R}}
\title{Self-conjugate $6$-cores and quadratic forms}
\author{Michael Hanson \& Marie Jameson}
\date{September 2021}
\begin{document}

\maketitle

\begin{abstract}
In this work, we analyze the behavior of the self-conjugate 6-core partition numbers $sc_{6}(n)$ by utilizing the theory of quadratic and modular forms. In particular, we explore when $sc_{6}(n) > 0$. Positivity of $sc_{t}(n)$ has been studied in the past, with some affirmative results when $t > 7$. The case $t = 6$ was analyzed by Hanusa and Nath, who conjectured that $sc_{6}(n) > 0$ except when $n \in \{2, 12, 13, 73\}$. This inspires a theorem of Alpoge, which uses deep results from Duke and Schulze-Pillot to show that $sc_{6}(n) > 0$ for $n \gg 1$ using representation numbers of a particular ternary quadratic form $Q$.

Approximating such representation numbers involves class numbers of imaginary quadratic fields, which are directly related to values of Dirichlet $L$-functions. At present, we can only ineffectively bound these from below. This is currently the main hurdle in obtaining more explicit approximations for representation numbers of ternary quadratic forms, and in particular in showing explicit positivity results for $sc_{6}(n)$. However, by assuming the Generalized Riemann Hypothesis we are able to settle Hanusa and Nath's conjecture.
\end{abstract}

\section{Introduction \& Statement of Results}

A \emph{partition} of a nonnegative integer $n$ is a non-increasing sequence of positive integers (called parts) which sum to $n.$ Let $p(n)$ denote the number of partitions of $n$. Each partition of $n$ can be represented by a \emph{Ferrers diagram}, in which the number of cells in the $i$th row of the diagram is the $i$th part of the partition. The \emph{hook length} of a cell in the Ferrers diagram is the number of squares  below or to right of the cell (including itself). For example, (4,2,1,1) is a partition of 8 which has the following Ferrers diagram, where each cell is labeled with its hook length.
\[\ytableausetup{centertableaux}
\begin{ytableau}
7 & 4 & 2 & 1\\
4 & 1 \\
2\\
1
\end{ytableau}\]

For $t \in \NN$, a \emph{$t$-core partition} is a partition for which no hook length in the Ferrers diagram is a multiple of $t$. Denote the number of $t$-core partitions of $n$ by $c_{t}(n)$. Partition hook lengths and $t$ cores are objects of fundamental importance which appear in several areas of mathematics; for example, they have connections to the representation theory of $S_n$ and $A_n,$ congruences for $p(n),$ class numbers, and more (see, for example, \cite{GarvanKimStanton, GranvilleOno, OnoSze, Sagan}).

Here, we are interested in \emph{self-conjugate $t$-core partitions}, which are $t$-core partitions whose Ferrers diagram remains the same after switching its columns and rows. We denote the number of self-conjugate $t$-core partitions by $sc_{t}(n)$. Here also, we find deep connections between $sc_t(n)$ and other mathematical objects; for example, K.\ Ono and W.\ Raji W. Raji \cite{OnoRaji} proved that in many cases, $sc_7(n)$ is equal to a Hurwitz class number. Work of K.\ Bringmann, B.\ Kane, J.\ Males, and others also made connections between self-conjugate $t$-cores and $t$-cores, Hurwitz class numbers, and sums of squares (see, for example, \cite{Bringmann-Kane-Males, DawseySharp, MalesTripp}). Much of this work relies on connecting self-conjugate $t$-core partitions to the theory of modular forms.

One of the first questions that arise in this study is the following: when is $c_t(n)>0,$ and when is $sc_t(n)>0?$ For $c_t(n),$ this came in the form of the \emph{$t$-core positivity conjecture,} which asserts that $c_t(n)>0$ for every integer $t\geq 4.$ This was proved by A. Granville and K.\ Ono \cite{GranvilleOno}. For $sc_t(n),$ work of Baldwin et al.\ \cite{BaldwinDepwegFordKuninSze} shows that for $n\neq 2$ and $t=8$ or $t\geq 10,$ we have that $sc_t(n)>0.$ However, they note that $sc_6(n)$ is not always positive, since (for example) $sc_6(13)=0.$ After computing many values of $sc_6(n),$ Hanusa and Nath \cite{HanusaNath} made a precise conjecture regarding the positivity of $sc_6(n).$

\begin{conj}[Conjecture 3.5 of \cite{HanusaNath}] \label{conj:HanusaNath}
Let $n$ be a positive integer. Then $sc_6(n)>0$ except when $n\in \{2, 12, 13, 73\}.$
\end{conj}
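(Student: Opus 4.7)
The plan is to follow Alpoge's approach of relating $sc_6(n)$ to representation numbers of a ternary quadratic form, but to make every analytic ingredient effective by invoking GRH. The first step is to use the identity (essentially due to Garvan-Kim-Stanton) expressing the generating function of $sc_6(n)$ in terms of theta series; from this one obtains an identity of the shape $sc_6(n) = \tfrac{1}{k} r_Q(an+b)$ for an explicit ternary quadratic form $Q$ and explicit constants $a,b,k$, where $r_Q(m)$ denotes the number of representations of $m$ by $Q$. The function $\theta_Q(\tau) = \sum_m r_Q(m) q^m$ is a weight $3/2$ modular form on a congruence subgroup, and the problem reduces to showing $r_Q(an+b) > 0$ for all but the four exceptional values of $n$.

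I would then decompose $\theta_Q = E_Q + f_Q$ into its Eisenstein and cuspidal components. By the Siegel-Weil formula, the $m$th Fourier coefficient of $E_Q$ can be written as a product of local densities times a Hurwitz-type class number $H(D)$ attached to an imaginary quadratic order of discriminant $D$ proportional to $-m$. Via the class number formula, this in turn reduces to lower bounds on $L(1, \chi_D)$ for quadratic characters. Unconditionally only Siegel's ineffective bound $L(1,\chi_D) \gg_\varepsilon |D|^{-\varepsilon}$ is available; under GRH one has the effective bound $L(1,\chi_D) \gg 1/\log|D|$, which I would combine with the local density computation to obtain an effective lower bound of the shape $a_{E_Q}(m) \geq c_1 \sqrt{m}/\log m$ for an explicit $c_1 > 0$, valid for all $m$ in the arithmetic progression $an+b$.

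To control the cuspidal contribution, I would invoke Waldspurger's theorem, which expresses the squared Fourier coefficients of a weight $3/2$ Hecke eigenform in terms of central $L$-values of an associated weight $2$ newform, and combine it with Deligne's bound on the Fourier coefficients of the latter. This yields an upper bound of the shape $|a_{f_Q}(m)| \leq c_2\, m^{1/4+\varepsilon} d(m)^{1/2}$ with an effective constant $c_2$, obtained by working out an explicit orthogonal basis of the cuspidal part of the relevant weight $3/2$ space and bounding each component individually. Comparing the two, the Eisenstein term dominates once $m$ exceeds some explicit threshold $M_0$, giving $sc_6(n) > 0$ for all $n > (M_0 - b)/a$.

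The main obstacle, and what forces the argument to be carried out with care rather than in broad strokes, is ensuring that $M_0$ is small enough to render the remaining finite check feasible. All constants arising in the Siegel-Weil local densities, in the transfer through Waldspurger's formula, and in the norm of the cuspidal projection must be tracked carefully; naive estimates easily produce a threshold far beyond what any computer can verify. Once an adequate $M_0$ is secured, I would finish by directly computing $sc_6(n)$ for $n \leq (M_0 - b)/a$ using the Euler product for the self-conjugate $6$-core generating function and confirming that the only zeros occur at $n \in \{2,12,13,73\}$.
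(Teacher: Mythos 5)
Your overall strategy is the same as the paper's: reduce to $r_Q(24n+35)>0$ for the ternary form $Q=3x^2+32y^2+32yz+32z^2$, split $\theta_Q$ into Eisenstein and cuspidal pieces, bound the Eisenstein coefficient from below by a class number (hence by $L(1,\chi_{-N})$, effective under GRH), bound the cuspidal coefficient from above via Waldspurger, and finish with an explicit threshold and a finite computation. However, there are two genuine gaps in the way you propose to execute it.

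First, the bound $|a_{f_Q}(m)|\leq c_2\,m^{1/4+\varepsilon}d(m)^{1/2}$ does not follow from Waldspurger combined with Deligne's bound. For square-free $t$, Waldspurger gives $a_{f_Q}(t)^2\asymp L(F\otimes\chi_{-t},1)\,t^{1/2}$, and Deligne's bound on the coefficients of the weight-$2$ newform $F$ yields only the convexity bound $L(F\otimes\chi_{-t},1)\ll t^{1/2+\varepsilon}$ (the conductor of the twist is $\asymp t^2$), hence $|a_{f_Q}(t)|\ll t^{1/2+\varepsilon}$ --- the same order as your Eisenstein lower bound $c_1\sqrt{m}/\log m$, so the comparison never closes and no threshold $M_0$ is produced. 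To reach exponent $1/4$ you need a Lindel\"of-type bound on the central values in the conductor aspect, i.e.\ GRH for the modular $L$-functions $L(F\otimes\chi_{-t},s)$; this is precisely where the paper invokes Chandee's explicit conditional bounds, which in turn require knowing the conductor of the twisted curve (Proposition \ref{prop:conductor}). Since you must track explicit constants anyway for the finite check, this is not a cosmetic substitution.

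Second, you never treat non-square-free $m=24n+35$. Waldspurger and the class-number formula for the Eisenstein coefficients, as you use them, apply to square-free arguments; for $m=tu^2$ the cuspidal coefficient acquires Hecke-eigenvalue factors and the Eisenstein coefficient becomes a class number of a non-maximal order, so the inequalities must be re-derived there. The paper avoids this entirely with a separate, unconditional argument (Theorem \ref{thm:square-free}): if $N$ is square-free with $r_Q(N)=0$ and $r_Q(Np^2)=0$, then comparing primitive representation counts across the genus, using $G(N)=\tfrac{1}{2}h(-4N)$ and $h(-4Np^2)/h(-4N)=p-\leg{-4N}{p}$, forces $A(p)\geq(p-1)/2$, contradicting Hasse's bound for $p>17$ (and failing for $5\leq p\leq 17$ by inspection). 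Some argument of this kind, or a uniform version of your two bounds valid for all $m$ in the progression, is needed before the final computer verification can be said to complete the proof.
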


A key step in this direction was made by L. Alpoge \cite{Alpoge}, who used the generating function for $sc_6(n)$
\[\sum_{n\geq 0} sc_6(n)q^n = \prod_{n\geq 1} \frac{(1-q^{2n})^2(1-q^{12n})^3}{(1-q^n)(1-q^{4n})}\]
to make the following connection between $sc_6(n)$ and representation numbers of a certain ternary quadratic form.
\begin{thm} \label{thm:alpoge}
For all $n\geq 0,$
\[sc_6(n) = \frac{1}{12} \#\{(x,y,z)\in\mathbb{Z}^3: 24n+35 = 3x^{2} + 32y^{2} + 32yz + 32z^{2}\}.\]
\end{thm}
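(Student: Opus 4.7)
The plan is to recognize both sides as Fourier expansions of the same weight $3/2$ modular form on a common congruence subgroup, and then verify equality by matching a bounded number of Fourier coefficients via Sturm's theorem.

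First, I rewrite the stated generating function in terms of Dedekind's eta function. Using $\eta(\tau)=q^{1/24}\prod_{n\geq 1}(1-q^n)$ and tracking the accumulated fractional power $q^{-35/24}$, the product displayed for $\sum sc_6(n)q^n$ transforms, after the substitution $\tau\mapsto 24\tau$, into
\[
\sum_{n\geq 0} sc_6(n)\,q^{24n+35} \;=\; \frac{\eta(48\tau)^2\,\eta(288\tau)^3}{\eta(24\tau)\,\eta(96\tau)}.
\]
I then apply the standard eta-quotient criterion: the weight is $\tfrac12(-1+2-1+3)=\tfrac32$; the exponents satisfy $\sum r_\delta\delta = -24+96-96+864 = 840 \equiv 0\pmod{24}$; and with $N=288$ one has $\sum r_\delta(N/\delta) = -12+12-3+3 \equiv 0\pmod{24}$. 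A check of the cusp orders establishes holomorphy, so the eta quotient is a holomorphic weight-$3/2$ modular form on $\Gamma_0(288)$ with an explicit Nebentypus.

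On the quadratic-form side, Shimura's theorem ensures that
\[
\Theta_Q(\tau)\;=\;\sum_{(x,y,z)\in\ZZ^3} q^{3x^2+32y^2+32yz+32z^2}
\]
is a weight $3/2$ modular form on some $\Gamma_0(M)$ with character controlled by $\det Q$. To isolate the coefficients in the arithmetic progression $n\equiv 35\pmod{24}$, I apply the standard progression projector
\[
\Theta_Q\;\longmapsto\;\frac{1}{24}\sum_{a=0}^{23} e^{-2\pi i\cdot 35a/24}\,\Theta_Q\!\left(\tau+\tfrac{a}{24}\right),
\]
which again yields a weight-$3/2$ form, now on a congruence subgroup I can take to be a common refinement with $\Gamma_0(288)$. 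Both $12$ times the eta quotient above and this sieved projection of $\Theta_Q$ then lie in a single finite-dimensional space $M_{3/2}(\Gamma_0(L),\chi)$ for some common level $L$ and character $\chi$.

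It remains to invoke Sturm's bound in this common space: equality of the two $q$-expansions up to an explicit index forces equality as modular forms, so the theorem reduces to a finite coefficient comparison. The main obstacle I expect is organizational: correctly computing the level and Nebentypus of the sieved theta series so that it truly shares a space with the eta quotient, and keeping the resulting Sturm bound small enough to make the final coefficient check tractable. The underlying modularity inputs (Ligozat for eta quotients, Shimura for ternary theta series) are standard, and the factor of $1/12$ should emerge naturally once the normalizations on both sides are pinned down.
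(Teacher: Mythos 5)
Your strategy is legitimate in principle but genuinely different from the paper's, and it is worth seeing what the paper's route buys. The paper does not place the two sides in a common space and compare finitely many coefficients; instead it factors the eta quotient $\frac{\eta(48z)^2}{\eta(24z)}\cdot\frac{\eta(288z)^3}{\eta(96z)}$ into two pieces each of which has an exact arithmetic interpretation: the first factor equals $\tfrac12\sum_{n\in\ZZ}q^{3(2n+1)^2}$, and the second equals $\sum_{n\geq 0} c_3(n)q^{32(3n+1)}$, where by a lemma of Han and Ono $c_3(n)=\tfrac16\#\{(x,y)\in\ZZ^2: 3n+1=x^2+xy+y^2\}$. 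Multiplying these two series gives the claimed identity on the nose, with the $\tfrac{1}{12}=\tfrac12\cdot\tfrac16$ arising structurally; the only additional observation needed is that any representation of $24n+35$ by $Q$ forces $x$ to be odd (reduce mod $8$), so the restricted count coming from the theta factor already accounts for all of $\ZZ^3$. That argument is short, exact, and requires no coefficient computation or level bookkeeping.

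Your route would work, but as written it is a plan rather than a proof: the decisive steps are all deferred. You have not computed the level and Nebentypus of the sieved theta series, verified holomorphy of the eta quotient at every cusp, identified the common space, determined the Sturm bound there (the sieve mod $24$ inflates the level of $\theta_Q\in M_{3/2}(\Gamma_0(96))$ by roughly a factor of $24^2$, so the bound will run into the thousands of coefficients), or performed the resulting finite check. You should also be careful with the half-integral-weight versions of the Ligozat criterion and of Sturm's theorem, which require the correct theta multiplier and character; these are standard but not automatic. Finally, the factor $1/12$ does not ``emerge naturally'' in your approach --- it is simply part of the identity being verified numerically --- whereas in the paper it has a conceptual origin as a product of two symmetry factors. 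None of these issues is fatal, but together they constitute the actual content of a Sturm-bound proof, and until they are carried out the argument is not complete.
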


\begin{rem}
The statement of this theorem has been adjusted to correct an error in Alpoge's calculations.
\end{rem}

This theorem is crucial because it reduces Hanusa-Nath's positivity conjecture to the question of which nonnegative integers of the form $24n+35$ are represented by the quadratic form \[Q:=3x^2+32y^2+32yz+32z^2.\]
Alpoge then applies deep results of Duke and Schulze-Pillot \cite{DSP} to this quadratic form to prove that $sc_6(n)>0$ for sufficiently large $n$, but this result is ineffective. There is at present no unconditional way to resolve Conjecture \ref{conj:HanusaNath}; counting the representations of a sufficiently large integer $n$ by $Q$ is approximated by an expression involving a class number of an imaginary quadratic field and so is intimately related to the value of a Dirichlet $L$-function, which can be ineffectively bounded from below by Siegel's theorem. 

In this work, we assume the Generalized Riemann Hypothesis (GRH) in order to prove the following statement about this quadratic form $Q.$ In order to state the theorem, we let $r_Q(n)$ denote the number of representations of $n$ by $Q,$ i.e., $r_{Q}(n) := \#\{\mathbf{x} \in \ZZ^{3} : n = Q(\mathbf{x})\}.$

\begin{thm} \label{maintechthm}
Assume the GRH for all Dirichlet $L$-functions and all modular $L$-functions and let $n$ be a positive integer. Then $r_Q(24n+35)>0$ except when $n \in \{2, 12, 13, 73\}.$
\end{thm}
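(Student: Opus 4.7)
\medskip
\noindent\textbf{Proof plan for Theorem \ref{maintechthm}.}
The plan is to decompose the weight-$3/2$ theta series $\theta_Q(\tau)=\sum_{n\geq 0} r_Q(n)q^n$ as an Eisenstein part plus a cusp form part, $\theta_Q = E_Q + f_Q$, in the space $M_{3/2}(\Gamma_0(N),\chi)$ for the appropriate level $N$ and character $\chi$ determined by $Q$. Write $a_{E_Q}(N)$ and $a_{f_Q}(N)$ for the Fourier coefficients of $E_Q$ and $f_Q$, so that $r_Q(N) = a_{E_Q}(N) + a_{f_Q}(N)$. The strategy, following Duke--Schulze-Pillot \cite{DSP} but with GRH used to make their ineffective input effective, is to prove an inequality $a_{E_Q}(24n+35) > |a_{f_Q}(24n+35)|$ for all $n$ exceeding some explicit bound $N_0$, and then to verify the statement for $n\le N_0$ computationally using Alpoge's formula (Theorem~\ref{thm:alpoge}).

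The first step is to obtain an effective lower bound on $a_{E_Q}(24n+35)$. By Siegel's mass formula the Eisenstein coefficient factors as a product of local densities, and a standard computation (carried out carefully at the anisotropic primes, here $p=2$ and $p=3$, broken into cases according to the $p$-adic valuation of $24n+35$) expresses $a_{E_Q}(24n+35)$ as an explicit nonzero rational factor depending on $n \pmod{M}$ for a small modulus $M$, multiplied by $\sqrt{24n+35}\cdot L(1,\chi_{D})$, where $\chi_D$ is the Kronecker symbol attached to the discriminant $D$ associated to $24n+35$ via the form $Q$. Since each integer $24n+35$ is coprime to $6$ and congruent to $11\pmod{24}$, one checks the residue class conditions are satisfied so that this main term never vanishes. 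Under GRH for the Dirichlet $L$-function $L(s,\chi_D)$, Littlewood's theorem yields $L(1,\chi_D)\gg 1/\log\log|D|$ with an explicit constant, giving $a_{E_Q}(24n+35)\gg \sqrt{n}/\log\log n$ effectively.

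For the cusp form part, I would diagonalize $f_Q$ with respect to Hecke operators (modulo the subspace spanned by unary theta series, which do not occur here since every cusp component of $\theta_Q$ lies in the Kohnen-type subspace where Waldspurger applies). By Waldspurger's theorem, for each Hecke eigenform component $g_i$ of $f_Q$, the squared Fourier coefficient at a squarefree argument is proportional, up to an explicit local correction, to the central value $L(\tfrac12, G_i\otimes \chi_D)$ of the Shimura lift $G_i\in S_2(\Gamma_0(N'))$ twisted by a quadratic character. Assuming GRH for these modular $L$-functions, the convexity-breaking bound yields $L(\tfrac12, G_i\otimes \chi_D)\ll_\varepsilon D^{\varepsilon}$ with an explicit implied constant. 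Reconstructing $a_{f_Q}(24n+35)$ from these squarefree values using the Hecke recursion, I obtain a bound $|a_{f_Q}(24n+35)|\ll_\varepsilon n^{1/4+\varepsilon}$ that is fully explicit up to a finite sum over the eigenforms $g_i$.

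Comparing the two bounds produces an explicit $N_0$ beyond which positivity of $r_Q(24n+35)$ is guaranteed; a direct enumeration of representations by $Q$ for $n\le N_0$ then finishes the proof, matching Conjecture \ref{conj:HanusaNath}. The main obstacle I anticipate is keeping $N_0$ small enough that the computational verification is tractable: this requires tracking constants carefully in all GRH-based bounds, explicitly computing the local densities for $E_Q$ at $2$ and $3$ for each residue class of $n$ modulo a small modulus, and producing the full Hecke decomposition of the cusp form space in which $f_Q$ lives (in particular, identifying the Shimura lifts $G_i$ and the constants of proportionality in Waldspurger's formula for the specific form $Q$). The arithmetic of the exceptional set $\{2,12,13,73\}$ should then fall out of the $n\le N_0$ check rather than from any theoretical obstruction.
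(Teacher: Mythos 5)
Your plan is essentially the paper's proof: decompose $\theta_Q$ into Eisenstein plus cuspidal parts, bound the Eisenstein coefficient below via the class number formula and a GRH-conditional lower bound on $L(1,\chi_{-N})$, bound the cusp coefficient above via Waldspurger and a GRH-conditional (Chandee-type) bound on the central value $L(E\otimes\chi_{-N},1)$, compare to get an explicit cutoff ($n\leq 916347$ in the paper), and finish by computation. The only organizational difference is that the paper disposes of non-square-free $N=24n+35$ first by an elementary argument (Hecke recursion plus genus theory and the Hasse bound) rather than folding them into the analytic estimate, but this does not change the substance of the approach.
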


This conditionally settles Hanusa-Nath's positivity conjecture.

\begin{cor} \label{thm:hanusanath}
Assume the GRH for all Dirichlet $L$-functions and all modular $L$-functions. Then Conjecture \ref{conj:HanusaNath} is true.
\end{cor}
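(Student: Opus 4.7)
The plan is to deduce Corollary \ref{thm:hanusanath} as an essentially immediate combination of Theorem \ref{thm:alpoge} and Theorem \ref{maintechthm}. The real work is packaged inside Theorem \ref{maintechthm}; at the corollary level there is almost nothing left to do beyond lining up the two statements and handling the finite exceptional set.

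First I would invoke Theorem \ref{thm:alpoge}, which gives the exact identity
\[
sc_6(n) \;=\; \frac{1}{12}\, r_Q(24n+35)
\]
for every $n \geq 0$, where $Q = 3x^2 + 32y^2 + 32yz + 32z^2$ and $r_Q$ counts integral representations. Since $12 > 0$, the positivity of $sc_6(n)$ is equivalent to the positivity of $r_Q(24n+35)$, so the conjecture reduces verbatim to the positivity statement about $r_Q(24n+35)$.

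Next I would apply Theorem \ref{maintechthm} under the assumed GRH for all Dirichlet $L$-functions and all modular $L$-functions: this yields $r_Q(24n+35) > 0$ for every positive integer $n \notin \{2,12,13,73\}$, and hence $sc_6(n) > 0$ in the same range. The $n=0$ case is immediate since there is a unique empty self-conjugate $6$-core of $0$ (equivalently, $24\cdot 0 + 35 = 35 = 3\cdot 1^2 + 32\cdot 1^2 + 32\cdot 1\cdot(-1) + 32\cdot(-1)^2$ up to units, so $r_Q(35)>0$).

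Finally I would record that the four listed exceptions are genuine: a direct finite check (using either the generating product $\prod_{n\geq 1}(1-q^{2n})^2(1-q^{12n})^3/((1-q^n)(1-q^{4n}))$ expanded to order $q^{73}$, or equivalently checking that $24n+35 \in \{83, 323, 347, 1787\}$ is not represented by $Q$) confirms $sc_6(n) = 0$ for $n \in \{2,12,13,73\}$, matching Hanusa and Nath's original computations. There is no real obstacle here: the substantive difficulty has already been absorbed into Theorem \ref{maintechthm}, whose proof must control $r_Q(24n+35)$ by decomposing a theta series into Eisenstein and cuspidal parts and bounding the cuspidal contribution against the Eisenstein main term using GRH-quality bounds on Dirichlet $L$-values and Hecke eigenvalues.
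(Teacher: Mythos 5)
Your proposal is correct and matches the paper's (implicit) argument exactly: the corollary is deduced by combining Theorem \ref{thm:alpoge}, which identifies $sc_6(n)$ with $\frac{1}{12}r_Q(24n+35)$, with Theorem \ref{maintechthm}, which gives the conditional positivity of $r_Q(24n+35)$ outside the exceptional set. The paper offers no further proof at the corollary level, so your additional remarks (the $n=0$ case, which is not needed since the conjecture concerns positive $n$, and the finite verification of the exceptions) are harmless extras.
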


In Section \ref{section:background} we provide a brief overview of the theory of quadratic forms and modular forms, including key results that will be used in later sections. We prove Theorem \ref{thm:alpoge} in Section \ref{section:alpoge}, and Theorem \ref{maintechthm} will be proved in Section \ref{section:hanusanath}. 


\section{Background}\label{section:background}

Here we provide a brief overview of some key concepts in the theory of quadratic forms and modular forms that we use to prove Theorem \ref{maintechthm}. See, for example, \cite{Iwaniec}.

Let $Q = Q(x_{1}, x_{2}, x_{3})$ be a positive definite integral ternary quadratic form. That is, $Q$ is a homogeneous degree-2 polynomial in three variables with coefficients in $\ZZ$ which can be expressed as
        $$Q(\mathbf{x}) = \frac{1}{2}\mathbf{x}^{t}A\mathbf{x},$$
where $A$ is a positive definite symmetric matrix with integer entries (which are even on the diagonal). We wish to understand the behavior of the function $r_{Q}(n) := \#\{\mathbf{x} \in \ZZ^{3} : n = Q(\mathbf{x})\}.$

Given $k\in \frac{1}{2}\ZZ$ and $N \in \NN$, let $M_{k}(\Gamma_{0}(N), \chi)$ and $S_{k}(\Gamma_{0}(N), \chi)$ denote respectively the spaces of modular forms and cusp forms of weight $k$, level $N$, and Nebentypus character $\chi$. When $\chi$ is trivial we drop it from notation. It is known that the \emph{theta function} associated to a ternary quadratic form $Q$,
        $$\theta_{Q}(z) := \sum_{\mathbf{x} \in \ZZ^{3}} q^{Q(\mathbf{x})} = \sum_{n\geq 0} r_{Q}(n)q^{n},$$
is a modular form of weight $3/2,$ level $2N,$ and character $\leg{2\det(A)}{\cdot},$ where $N$ is the least integer for which $NA^{-1}$ has integer entries (although $N$ may not be the minimal level).

In order to prove Theorem \ref{thm:hanusanath}, we must understand which integers are represented by $Q$, so we must understand which Fourier coefficients of $\theta_Q$ are nonzero. To do this, we follow the approach introduced by K.\ Ono and K.\ Soundararajan \cite{OnoSound97} (see also \cite{LemkeOliver, Rouse}). First we decompose the theta function as
\[\theta_Q(z) = E(z) + C(z)\]
where $E(z)$ is an Eisenstein series and $C(z)$ is a cusp form. Note that this decomposition can be computed quickly since the Eisenstein series is equal to a weighted sum of theta functions of the forms in the \emph{genus} $\mathcal{G}(Q)$ of $Q$ by
\[E(z) = \frac{\sum_{Q'\in \mathcal{G}(Q)} (1/|\mathrm{Aut}(Q')|) \theta_{Q'}}{\sum_{Q'\in \mathcal{G}(Q)} (1/|\mathrm{Aut}(Q')|)}.\]
Here, the genus of $Q$ is the set of ternary forms $Q'$ which are equivalent to $Q$ over the local rings $\ZZ_{p}$ for each prime $p$, as well as over $\RR$. Next, we work to understand the coefficients of $E(z)$ and $C(z).$

Letting $a_{E}(n)$ denote the Fourier coefficients of $E(z)$, it is known that if $n\geq 1$ is square-free then
        \begin{align}\label{eisenstein class}
        a_{E}(n) = \frac{24h(-nM)}{Mw(-nM)}\prod_{p \mid 2N} \beta_{p}(n)\cdot \frac{1 - \chi(p)\leg{n}{p}p^{-1}}{1 - 1/p^{2}},
        \end{align}
where $M$ is a rational number depending on $n \pmod{8N^{2}}$ with the property that $nM$ is a fundamental discriminant, $h(-nM)$ is the class number of the ring of integers in $\QQ(\sqrt{-nM})$, $w(-nM)$ is half of the number of roots of unity in $\QQ(\sqrt{-nM})$, and the $\beta_{p}(n)$ are certain local densities depending on the image of $n$ in the set
        $$\prod_{p \mid \Delta}  \QQ_{p}^{\times} / \left( \QQ_{p}^{\times}\right)^{2}.$$
Thus, for all $n$ in a fixed square class, we may write $a_E(n)=ah(-bn),$ where the constants $a$ and $b$ depend only on the square class under consideration. 

In order to study the coefficients of the cusp form $C(z),$ we will first apply the Shimura correspondence in order to obtain an integer weight modular form  (see e.g.\ \cite[Section 6]{Rouse}).
\begin{thm}
Suppose that $f(z) = \sum_{n\geq 1}a(n)q^{n} \in S_{\lambda + 1/2}(\Gamma_{0}(4N), \chi)$ is a half-integral weight cusp form with $\lambda \geq 1$. Let $t$ be a positive square-free integer and set
        $$\mathcal{S}_{t}(f(z)) := \sum_{n\geq 1} \left( \sum_{d \mid n} \chi(d)\leg{(-1)^{\lambda}t}{d} d^{\lambda - 1}a(tn^{2}/d^{2})\right)q^{n}.$$
Then $\mathcal{S}_{t}(f(z)) \in M_{2\lambda}(\Gamma_{0}(2N), \chi^{2})$. It is a cusp form if $\lambda > 1$, and if $\lambda = 1$ it is a cusp form if $f(z)$ is orthogonal to all cusp forms $\sum_{n\geq 1} \psi(n)nq^{n^{2}}$, where $\psi$ is an odd Dirichlet character. 
\end{thm}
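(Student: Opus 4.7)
The plan is to follow Shimura's classical approach via Rankin--Selberg convolution and Weil's converse theorem. The first step is a purely formal manipulation: writing $A_t(n)$ for the $n$th Fourier coefficient of $\mathcal{S}_t(f)$ and $\chi_t(d) := \chi(d)\leg{(-1)^{\lambda}t}{d}$, a short Dirichlet convolution computation gives
\[
\sum_{n \geq 1} \frac{A_t(n)}{n^s} \;=\; L(s-\lambda+1, \chi_t) \sum_{m \geq 1} \frac{a(tm^2)}{m^s}.
\]
The task therefore reduces to establishing the analytic properties (holomorphy, boundedness in vertical strips, functional equation) that this product of $L$-series must satisfy in order to arise from a weight-$2\lambda$ modular form on $\Gamma_0(2N)$ with character $\chi^2$.

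The core step is a Rankin--Selberg calculation: I would evaluate the Petersson inner product of $f$ against the product of a unary theta series of weight $1/2$ and a non-holomorphic Eisenstein series of weight $\lambda$, then unfold to express a completed version of $\sum_m a(tm^2) m^{-s}$ as a meromorphic function satisfying a functional equation of the form $s \mapsto 2\lambda - s$. Running the same calculation with $f$ replaced by each Dirichlet twist $f \otimes \rho$ (of modulus coprime to $2N$) produces the full family of twisted functional equations required as input to Weil's converse theorem. The converse theorem then guarantees that the Dirichlet series above is the $L$-series of a form $g \in M_{2\lambda}(\Gamma_0(2N), \chi^2)$, and matching Fourier coefficients identifies $g$ with $\mathcal{S}_t(f)$.

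The cuspidality criterion is obtained by locating the possible poles of the Rankin--Selberg integral. When $\lambda > 1$ the Eisenstein series contributes no residue at the relevant point, so $\mathcal{S}_t(f)$ is automatically cuspidal. When $\lambda = 1$, a residue at $s = 2$ can appear, and an explicit computation shows that it equals (up to an explicit nonzero constant) the Petersson pairing of $f$ with the theta series $\sum_n \psi(n) n q^{n^2}$ for an odd Dirichlet character $\psi$. Hence in the boundary case $\lambda = 1$, the lift is a cusp form precisely when the stated orthogonality condition holds.

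The hard part is the Rankin--Selberg step together with the careful bookkeeping of levels, characters, local factors, and half-integral weight multiplier systems needed to invoke Weil's converse theorem; this constitutes Shimura's foundational work on half-integral weight forms. Since the argument is long, classical, and not novel to the present application, I would simply cite the version recorded in \cite{Rouse} rather than reconstruct it in full.
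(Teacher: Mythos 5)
This theorem is not proved in the paper at all: it is quoted as background, with the proof deferred to the literature (the citation to \cite[Section 6]{Rouse}, which in turn rests on Shimura, Niwa, and Cipra). Your proposal ultimately does the same thing, and the outline you give of the classical argument is essentially accurate: the Dirichlet series factorization $\sum_n A_t(n)n^{-s} = L(s-\lambda+1,\chi_t)\sum_m a(tm^2)m^{-s}$ is a correct and routine convolution identity, and the Rankin--Selberg unfolding against a weight-$1/2$ unary theta series together with Weil's converse theorem is precisely Shimura's original route, with the residue of the Rankin--Selberg integral at the edge point governing cuspidality in the $\lambda=1$ case. One caveat worth flagging: Shimura's Rankin--Selberg/converse-theorem argument does not directly produce the level $2N$ asserted in the statement --- his original paper yields a larger level and only conjectured $2N$, which was subsequently established by Niwa via a theta-kernel (Weil representation) construction rather than the converse theorem. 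So if you intend the sketch to justify the statement exactly as written, the level claim needs either Niwa's method or additional local analysis beyond what you describe; since you close by citing the packaged version in \cite{Rouse}, this is a presentational rather than a logical gap, but the attribution of the $2N$ level to the converse-theorem approach should be corrected.
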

Moreover, one can show that if $p$ is a prime not dividing $4tN$ then $\mathcal{S}_{t}(f(z) \mid T(p^{2})) = \mathcal{S}_{t}(f(z)) \mid T(p)$. Thus, if $C(z)$ is an eigenform and $2t\mid N$, this guarantees that $F(z) := \mathcal{S}_t(C(z))$ is also an eigenform with the same eigenvalues. Finally, a deep theorem of Waldspurger \cite{Waldspurger} allows us to write the Fourier coefficients of $C(z)$ in terms of the central critical $L$-values of twists of $F(z)$. By a \emph{twist} of a form $F = \sum_{n\geq 0} a(n)q^{n} \in M_{k}(\Gamma_{0}(N), \chi)$ by a Dirichlet character $\psi$ modulo $N$, we mean the modular form $F \otimes \psi := \sum_{n\geq 0} \psi(n)a(n)q^{n}$. 

\begin{thm}[Waldspurger] \label{thm:waldspurger} 
Suppose $f(z) = \sum_{n\geq 1} a(n)q^{n} \in S_{\lambda + 1/2}(\Gamma_{0}(4N), \chi)$ is a Hecke eigenform for all Hecke operators $T(p^{2})$ for primes $p \nmid N$. Let $F(z) := \mathcal{S}_{t}(f(z))$ be the Shimura lift of $f(z)$ such that $F(z) \in S_{2\lambda}^{\text{new}}(\Gamma_{0}(2N), \chi^{2})$. If $n_{1}, n_{2} \in \NN$ are square-free with $n_{1}/n_{2} \in \left( \QQ_{p}^{\times} \right)^{2}$ for all $p \mid N$, then
        $$a(n_{1})^{2}L(F \otimes \chi^{-1}\chi_{n_{2}(-1)^{\lambda}}, \lambda)\chi(n_{2}/n_{1})n_{2}^{\lambda - 1/2} = a(n_{2})^{2}L(F \otimes \chi^{-1}\chi_{n_{1}(-1)^{\lambda}}, \lambda)n_{1}^{\lambda - 1/2}.$$
\end{thm}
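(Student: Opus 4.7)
The plan is to prove the identity via the theta correspondence, which realizes the Shimura lift as a theta lift. I would first pass to the adelic setting: let $\pi$ be the cuspidal representation of $\mathrm{PGL}_2(\mathbb{A})$ associated to $F$, and let $\widetilde{\pi}$ be the genuine cuspidal representation of the metaplectic cover $\widetilde{\mathrm{SL}}_2(\mathbb{A})$ generated by an adelic lift of $f$. The matching of Hecke eigenvalues that defines the Shimura lift is precisely the statement that $\widetilde{\pi}$ is the theta lift of $\pi$ under the dual reductive pair $\bigl(\widetilde{\mathrm{SL}}_2, \mathrm{O}(V)\bigr)$, for $V$ the three-dimensional quadratic space of trace-zero elements in a suitable quaternion algebra $B/\mathbb{Q}$. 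The newform hypothesis on $F \in S_{2\lambda}^{\text{new}}(\Gamma_0(2N),\chi^2)$ ensures $\pi$ is irreducible, so one has a well-defined theta lift to compare against.

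Next, I would identify each squared Fourier coefficient $a(n)^2$ with a toric period on $B_n^\times$, where $B_n$ is the quaternion algebra whose local ramification is dictated by the local root numbers $\epsilon\bigl(\pi_v \otimes \chi_{n(-1)^\lambda,v},\tfrac{1}{2}\bigr)$. A direct computation in the Weil representation model of the theta lift shows that, up to archimedean normalizing factors, $a(n)$ is an integral of a suitable test vector $\varphi \in \pi$ (transferred via Jacquet--Langlands to $B_n^\times$) along the non-split torus $T_n \subset B_n^\times$ corresponding to $\mathbb{Q}(\sqrt{-n(-1)^\lambda})$. Squaring this period and applying the Rankin--Selberg / period unfolding method on $B^\times$ then yields an identity of the form
\[a(n)^2 \;=\; L\bigl(F \otimes \chi^{-1}\chi_{n(-1)^\lambda},\, \lambda\bigr) \cdot n^{\lambda-1/2} \cdot \prod_v c_v(n),\]
in which each local factor $c_v(n)$ depends only on $\pi_v$, $\chi_v$, and the class of $n$ in $\mathbb{Q}_v^\times/(\mathbb{Q}_v^\times)^2$. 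Square-class invariance of $c_v$ at all finite places, together with the factor of $\chi(n)$ that the central character contributes at primes dividing $N$, immediately produces the stated identity by cross-multiplying the displays for $n_1$ and $n_2$: the hypothesis $n_1/n_2 \in (\mathbb{Q}_p^\times)^2$ for $p \mid N$ is exactly what is needed to make the products $\prod_v c_v(n_1)$ and $\prod_v c_v(n_2)$ agree up to $\chi(n_2/n_1)$.

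The main obstacle is the local analysis at the ramified places, and in particular at $p=2$, which is forced into the level by half-integrality. One has to (i) verify that the local toric integral is nonzero for some well-chosen test vector, so that the formula actually identifies $a(n)^2$ with a nonzero multiple of an L-value rather than being $0=0$; and (ii) compute the explicit local factors that exhibit the square-class invariance and the character dependence. This is the technical heart of Waldspurger's original paper \cite{Waldspurger}; subsequent treatments by Kohnen--Zagier, Baruch--Mao, Gross, and Prasad have clarified special cases and streamlined parts of the argument but do not bypass these local calculations. A secondary difficulty is translating between the classical formulation (Fourier coefficients of $f \in S_{\lambda+1/2}(\Gamma_0(4N),\chi)$) and the adelic formulation, which requires care with normalizations of the Weil representation, the choice of additive character, and the identification of Hecke eigenvalues across the Shimura correspondence.
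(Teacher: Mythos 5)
The paper does not prove this statement: it is quoted verbatim as a known ``deep theorem of Waldspurger'' with a citation to \cite{Waldspurger} (see also the pointer to \cite[Section 6]{Rouse}), and it is used as a black box in Sections 2 and 6. So there is no in-paper argument to measure your proposal against; the only fair comparison is with Waldspurger's own proof, and on that score your sketch does correctly identify the architecture of the real argument --- realize the Shimura lift as a theta lift for the pair $\bigl(\widetilde{\mathrm{SL}}_2,\mathrm{O}(V)\bigr)$ with $V$ the trace-zero part of a quaternion algebra, identify $a(n)^2$ with a toric period, factor the resulting global identity into local terms, and observe that the local terms depend only on the square class of $n$ at each place. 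The hypothesis $n_1/n_2\in(\QQ_p^\times)^2$ for $p\mid N$ entering exactly through square-class invariance of the local factors is the right explanation of why the theorem has the shape it does.

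That said, what you have written is a roadmap, not a proof. Every step that carries actual content is deferred: the nonvanishing of the local toric integrals for a suitable test vector (without which the identity degenerates to $0=0$), the explicit computation of the local factors $c_v(n)$ exhibiting square-class invariance and producing the $\chi(n_2/n_1)$ and $n^{\lambda-1/2}$ normalizations, the ramified analysis at $p=2$ forced by the level $4N$, and the archimedean computation that pins down the weight dependence. You name these as ``the technical heart of Waldspurger's original paper,'' which is accurate, but it means your proposal ultimately reduces to the same citation the paper makes, with commentary attached. One smaller imprecision: you place $\pi$ on $\mathrm{PGL}_2(\mathbb{A})$, which presumes $\chi^2$ trivial; the theorem as stated allows $F\in S_{2\lambda}^{\text{new}}(\Gamma_0(2N),\chi^2)$ with nontrivial nebentypus, so in general one must work on $\mathrm{GL}_2$ with central character, and the quaternionic period formula needs the corresponding adjustment. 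If the goal is only to justify the theorem's use in this paper, the citation suffices and your sketch is a reasonable gloss; if the goal is an actual proof, the local analysis you postponed is the proof.
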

Thus, for all square-free $n$ in a fixed square class, we may write the Fourier coefficients of $C(z)$ as $a_C(n) = \pm dn^{1/4}L(F\otimes \chi^{-1}\chi_{-n},1)^{1/2},$ where $\chi$ is the Nebentypus character of $C(z)$ and $\chi_{-n}(\cdot)=\leg{-n}{\cdot}.$

Putting this together, for all square-free $n$ in a fixed square class, we have that \[r_Q(n) = ah(-bn) \pm dn^{1/4}L(F\otimes \chi^{-1}\chi_{-n},1)^{1/2}\]
for some constants $a,b,d$ (which depend on the square class). Since Dirichlet's class number formula gives
\begin{align}\label{DCNF}
h(-bn) = \frac{w\sqrt{bn}}{2\pi}L(\chi_{-bn}, 1),\quad w := \begin{cases} 2, & -bn < -4, \\ 4, & -bn = -4, \\ 6, & -bn = -3,  \end{cases}
\end{align}
we know that if $n$ is not represented by $Q,$ it follows that
\[\frac{L(F\otimes \chi^{-1}\chi_{-n},1)^{1/2}}{L(\chi_{-bn}, 1)} \geq \frac{a\sqrt{b}}{d\pi}n^{1/4}.\]
On the other hand, results of Chandee \cite{Chandee} can give us upper bounds for this expression. This allows us to restrict the possible values of square-free $n$ which are not represented by $Q$ to a finite set; a computer can then check these cases individually.

Integers $n$ which are not square-free must be considered using a different approach. For our particular quadratic form, Theorem \ref{thm:square-free} shows that all such integers are represented.

\section{Proof of Theorem \ref{thm:alpoge}}\label{section:alpoge}

For completeness, we now give Alpoge's proof (see Theorem 6 of \cite{Alpoge}) but correct a minor error in his calculations.

\begin{proof} By work of C.R.H Hanusa and R. Nath \cite[equation (2)]{HanusaNath}, the generating function for $sc_6(n)$ is
\begin{align*}
\sum_{n\geq 0} sc_6(n)q^{24n+35} &= \left(\frac{\eta(48z)^2}{\eta(24z)}\right)\left(\frac{\eta(288z)^3}{\eta(96z)}\right),
\end{align*}
where $\eta(z):=q^{1/24}\prod_{n\geq 1}(1-q^n)$ is the Dedekind eta function. It is known that the first factor is
\[\frac{\eta(48z)^2}{\eta(24z)} = \sum_{n\geq 0}q^{3(2n+1)^2}=\frac{1}{2}\sum_{n\in\mathbb{Z}}q^{3(2n+1)^2}\]
and the second is (\cite{HanusaNath})
\[\frac{\eta(288z)^3}{\eta(96z)} = \sum_{n\geq 0} c_3(n)q^{32(3n+1)}\]
where $c_3(n)$ is the number of 3-cores of $n.$ Work of G. Han and K. Ono \cite[Lemma 2.5]{HanOno} tells us that
\[c_3(n) = \frac{1}{6}\#\{(x,y) \in \mathbb{Z}^2: 3n+1=x^2+xy+y^2\}\]
and thus it follows that
\[sc_6(n) = \frac{1}{12} \#\{(x,y,z)\in\mathbb{Z}^3: 24n+35 = 3x^{2} + 32y^{2} + 32yz + 32z^{2}\}\]
(noting that if $24n+35=3x^2 + 32y^2 + 32yz + 32z^2$ then $x$ must be odd) as desired.
\end{proof}


\section{Initial Calculations} \label{section:initialcalcs}

In this section, we will set some notation and make some initial calculations that will be helpful in proving Theorem \ref{thm:hanusanath}. Let $Q := 3x^{2} + 32y^{2} + 32yz + 32z^{2},$ which has associated matrix
    $$A = \begin{pmatrix} 6 & 0 & 0 \\ 0 & 64 & 32 \\ 0 & 32 & 64 \end{pmatrix}.$$
Using Sage or Magma, we find that the theta function corresponding to $Q$ is
\[\theta_Q(z) = \sum_{\mathbf{x}\in\mathbb{Z}^3} q^{Q(\mathbf{x})} = \sum_{n\geq 0}r_Q(n)q^n = 1 + 2q^3 + 2q^{12} + 2q^{27} + 6q^{32} + 12q^{35} + O(q^{40}) \in M_{3/2}(\Gamma_0(96)).\] 

It is convenient that the genus of $Q$ has size 2, and the other form is $Q' := 11x^2 + 10xy + 11y^2 + 6xz - 6yz + 27z^2.$ Thus one may compute that
\begin{align*}
E(z) &= \sum_{n\geq 0} a_E(n)q^n = \frac{1}{4}\theta_Q(z) + \frac{3}{4}\theta_{Q'}(z) = 1 + \frac{1}{2} q^{3} + 3 q^{11} + 2 q^{12} + \frac{7}{2} q^{27} + 6 q^{32} + 6 q^{35} + O(q^{40}),\\
C(z) &= \sum_{n\geq 0} a_C(n)q^n = \frac{3}{4}\left(\theta_Q(z) - \theta_{Q'}(z) \right) = \frac{3}{2} q^{3} - 3 q^{11} - \frac{3}{2} q^{27} + 6 q^{35} + O(q^{40}).
\end{align*}

Applying the Shimura correspondence to $C(z)$ when $t=3$ gives a constant multiple of the newform
\[F(z) = \sum_{n\geq 0} A(n)q^n = q - q^{3} - 2q^{5} + q^{9} + 4q^{11} - 2q^{13} + 2q^{15} + 2q^{17} - 4q^{19} +O(q^{20})\in S_2(\Gamma_0(24)),\]
which is the cusp form associated to the elliptic curve $E: y^2 = x^3-x^2+x.$ 

With Theorem \ref{thm:waldspurger} in mind, we will eventually consider certain twists $F\otimes \chi_{-(24n+35)}$ of $F$. These twists translate over to the associated elliptic curve $E$ by defining the twisted curve
    $$E \otimes \chi_{-N}: y^{2} = x^{3} + Nx^{2} + N^2x,$$
where $N=24n+35.$

The following proposition will be useful in Section \ref{section:hanusanath}.

\begin{prop}\label{prop:conductor}
If $N = 24n+35$ is square-free, then the conductor of $E \otimes \chi_{-N}$ is $24N^{2}$. 
\end{prop}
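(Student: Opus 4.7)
The plan is to determine the conductor of $E\otimes \chi_{-N}$ one prime at a time, combining the explicit Weierstrass model $y^{2}=x^{3}+Nx^{2}+N^{2}x$ with standard facts about twists of $2$-dimensional Galois representations. I would first record two useful arithmetic properties of $N=24n+35$: since $N$ is odd and $N\equiv 2\pmod{3}$, we have $\gcd(N,24)=1$; and since $N\equiv 3\pmod{4}$, we have $-N\equiv 1\pmod{4}$. Together with the hypothesis that $N$ is square-free, this makes $-N$ a fundamental discriminant, so the Kronecker character $\chi_{-N}$ has conductor exactly $N$ and is unramified at both $2$ and $3$.

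Next, I would compute the invariants of the twisted Weierstrass model, namely $c_{4}=-32N^{2}$, $c_{6}=224N^{3}$, and $\Delta=-48N^{6}$. For primes $p\nmid 6N$ both $E$ and $\chi_{-N}$ are unramified at $p$, so $E\otimes \chi_{-N}$ has good reduction there. For $p\mid N$ one has $p\geq 5$, together with $v_{p}(c_{4})=2$, $v_{p}(c_{6})\geq 3$, and $v_{p}(\Delta)=6$; the model is $p$-minimal and the reduction is additive, which for $p>3$ pins the local conductor exponent at exactly $2$.

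The remaining primes are $p=2$ and $p=3$, where $E$ itself has bad reduction with exponents $3$ and $1$ respectively, since $\mathrm{cond}(E)=24$. Because $\chi_{-N}$ is unramified at each of these primes, the local Galois representation of $E\otimes \chi_{-N}$ differs from that of $E$ by an unramified quadratic twist, which does not change the Artin conductor. The local exponents at $2$ and $3$ are therefore preserved, and multiplying the local contributions gives $\mathrm{cond}(E\otimes \chi_{-N})=2^{3}\cdot 3\cdot N^{2}=24N^{2}$.

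The main technical point is the stability of the local conductor exponent under unramified twist at $p=2$, where Tate's algorithm is notoriously delicate. If one prefers to avoid that abstract input, one can instead run Tate's algorithm directly on $y^{2}=x^{3}+Nx^{2}+N^{2}x$ at $p=2$, noting that $N=24n+35$ always forces $N\equiv 3\pmod{8}$, so the analysis reduces to a single $2$-adic congruence class and confirms that the reduction type at $2$ agrees with that of $E$.
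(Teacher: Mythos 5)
Your proof is correct, and it is considerably more explicit than the paper's, which simply writes down the model $y^{2}=x^{3}+Nx^{2}+N^{2}x$, records $\Delta=-48N^{6}$, and defers the computation of the exponents $f_{p}$ to Tate's algorithm without carrying it out. Your local analysis agrees with what that computation would produce: the invariants $c_{4}=-32N^{2}$, $c_{6}=224N^{3}$, $\Delta=-48N^{6}$ are right; the observation that $N\equiv 11\pmod{24}$ forces $\gcd(N,24)=1$ and $-N\equiv 1\pmod 4$ (so $-N$ is fundamental and $\chi_{-N}$ is unramified at $2$ and $3$) is the key arithmetic input; and at $p\mid N$ the inequality $v_{p}(c_{4})=2<4$ certifies $p$-minimality, with additive reduction giving $f_{p}=2$ since $p\geq 5$. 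Where you genuinely depart from the paper is at $p=2,3$: instead of running Tate's algorithm on the twisted model, you invoke the invariance of the Artin conductor under twisting by a character unramified at $p$ (the conductor exponent depends only on the restriction of the Galois representation to inertia, which an unramified twist does not change), so the exponents $3$ and $1$ are inherited from $\mathrm{cond}(E)=24$. This is a cleaner and more conceptual route than the direct $2$-adic case analysis the paper implicitly relies on, at the cost of importing a standard but nontrivial fact about Artin conductors; your closing remark that one could instead exploit $N\equiv 3\pmod 8$ to run Tate's algorithm in a single residue class is exactly the fallback that recovers the paper's approach.
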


\begin{proof}
We use the Weierstrass equation $y^{2} = x^{3} +Nx^{2} + N^{2}x$ for the twisted curve $E \otimes \chi_{-N},$ which has discriminant $\Delta := -48N^{6}$ (see, for example, \cite{Silverman}). The conductor $q$ of $E \otimes \chi_{-N}$ is given by
    $$q = \prod_{p \mid \Delta} p^{f_{p}},$$
where the exponents $f_p$ can be computed using Tate's algorithm (see \cite{Silverman1, Cremona}).
\end{proof}


\section{Proof of Theorem \ref{thm:hanusanath}}\label{section:hanusanath}

In order to prove Theorem \ref{maintechthm} we must first restrict our attention to values of $24n+35$ which are square-free so that we may apply Theorem \ref{thm:waldspurger} and equation \eqref{eisenstein class}. In order to do this, we proceed as in Sections 2 and 3 of \cite{OnoSound97}.

Let $R_{Q}(N)$ and $R_{Q'}(N)$ be the number of \emph{primitive} representations of $N$ by $Q$ and $Q'$, respectively. There are 12 automorphs of $Q$ (i.e., matrices $B$ of determinant 1 such that $B^TAB=B$), namely
\begin{align*}
\begin{pmatrix}-1&0&0\\0&-1&-1\\0&0&1\end{pmatrix}, \begin{pmatrix}-1&0&0\\0&-1&0\\0&1&1\end{pmatrix},& \begin{pmatrix}-1&0&0\\0&0&-1\\0&-1&0\end{pmatrix}, \begin{pmatrix}-1&0&0\\0&0&1\\0&1&0\end{pmatrix}\\
\begin{pmatrix}-1&0&0\\0&1&0\\0&-1&-1\end{pmatrix}, \begin{pmatrix}-1&0&0\\0&1&1\\0&0&-1\end{pmatrix},& \begin{pmatrix}1&0&0\\0&-1&0\\0&0&-1\end{pmatrix}, \begin{pmatrix}1&0&0\\0&-1&-1\\0&1&0\end{pmatrix}\\
\begin{pmatrix}1&0&0\\0&0&1\\0&-1&-1\end{pmatrix}, \begin{pmatrix}1&0&0\\0&0&-1\\0&1&1\end{pmatrix},& \begin{pmatrix}1&0&0\\0&1&1\\0&-1&0\end{pmatrix}, \begin{pmatrix}1&0&0\\0&1&0\\0&0&1\end{pmatrix}.
\end{align*}
Similarly, there are 4 automorphs of $Q'$, namely
\begin{align*}
\begin{pmatrix} 1 & 0 & 0 \\ 0 & 1 & 0 \\ 0 & 0 & 1 \end{pmatrix}, \begin{pmatrix} -1 & 0 & 1 \\ 0 & -1 & 1 \\ 0 & 0 & 1 \end{pmatrix}, \begin{pmatrix} 0 & -1 & 1 \\ -1 & 0 & -1 \\ 0 & 0 & -1 \end{pmatrix}, \begin{pmatrix} 0 & 1 & 0 \\ 1 & 0 & 0 \\ 0 & 0 & -1 \end{pmatrix}.
\end{align*}

We say that two representations of $N$ are \emph{essentially distinct} if there is no automorph which takes one to the other, and we let $G(N)$ be the number of essentially distinct primitive representations of $N$ by the genus of $Q$. When $N$ is square-free and coprime to $6$ one can check that
\begin{equation}\label{eq:G}
G(N) = \frac{1}{12}R_{Q}(N) + \frac{1}{4}R_{Q'}(N).
\end{equation}
Also, Theorem 86 in \cite{Jones} gives us that 
\begin{equation}\label{eq:jones}
G(N) = \frac{1}{2}h(-4N),
\end{equation}
with $-4N$ representing the discriminant rather than the determinant of the corresponding binary forms from \cite{Jones}.

\begin{thm}\label{thm:square-free}
If $N=24n+35$ is not square-free, then $r_Q(N)>0.$ 
\end{thm}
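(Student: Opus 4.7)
My approach combines a scaling reduction with Hecke-operator identities for the decomposition $\theta_Q = E + C$ from Section~\ref{section:initialcalcs}.

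Since $N = 24n+35$ is not square-free and $\gcd(N,6)=1$, there exist a prime $p \geq 5$ and an integer $e := \nu_p(N) \geq 2$; write $N = p^e m$ with $p \nmid m$. The dilation $(x,y,z) \mapsto (p^b x, p^b y, p^b z)$ scales $Q$-values by $p^{2b}$, so $r_Q(N) \geq r_Q(N/p^{2b})$ for every $0 \leq b \leq \lfloor e/2 \rfloor$. Choosing $b$ as large as possible (but retaining a factor of $p^2$ or $p^3$) reduces the problem to showing $r_Q(p^2 m) > 0$ when $e$ is even, and $r_Q(p^3 m) > 0$ when $e$ is odd. Note that $p^e \equiv p^{e \bmod 2} \pmod{24}$, so in either reduced case the argument $p^2 m$ (respectively $pm$) is still $\equiv 11 \pmod{24}$.

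For the Hecke step, since $\mathcal{S}_3(C) = c F$ for some nonzero $c$ and $\mathcal{S}_3(C \mid T(p^2)) = \mathcal{S}_3(C) \mid T(p)$ for $p \nmid 288$, the form $C$ is a $T(p^2)$-eigenform with eigenvalue $\lambda_p$ (the $T(p)$-eigenvalue of $F$), and $|\lambda_p| \leq 2\sqrt{p}$ by Deligne. Likewise $E$ is a $T(p^2)$-eigenform; its eigenvalue $\mu_p$ can be read off of \eqref{eisenstein class} using the conductor formula $h(-Df^2) = f \cdot h(-D) \prod_{q \mid f}\!\bigl(1 - (-D/q)/q\bigr)$, yielding $\mu_p \asymp p$. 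The weight-$3/2$ Hecke relation $a_*(p^2 n) = \bigl(\kappa_p^{(*)} - \chi(p)\bigl(\tfrac{-n}{p}\bigr)\bigr) a_*(n)$ for $p \nmid n$, summed over $* \in \{E, C\}$ at $n = m$, yields
\[
r_Q(p^2 m) = \mu_p\, a_E(m) + \lambda_p\, a_C(m) - \chi(p)\Bigl(\tfrac{-m}{p}\Bigr) r_Q(m).
\]
If $r_Q(m) > 0$ we finish by scaling. Otherwise $r_Q(m) = 0$ forces $a_C(m) = -a_E(m)$, and the display collapses to
\[
r_Q(p^2 m) = (\mu_p - \lambda_p)\, a_E(m).
\]
Since the genus of $Q$ represents $m$ locally (the square class $\equiv 11 \pmod{24}$ is admissible everywhere), \eqref{eisenstein class} gives $a_E(m) > 0$; and $\mu_p - \lambda_p > 0$ for every $p \geq 5$ by the bounds above. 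Therefore $r_Q(p^2 m) > 0$, and hence $r_Q(N) > 0$. The odd-$e$ case is handled identically by applying the Hecke relation at $n = pm$, where $p \| n$ forces $\bigl(\tfrac{-n}{p}\bigr) = 0$ and $a_*(n/p^2) = 0$, so that $r_Q(p^3 m) = \mu_p\, a_E(pm) + \lambda_p\, a_C(pm)$; the same dichotomy on the sign of $r_Q(pm)$ completes the argument.

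The main obstacle is establishing $\mu_p > |\lambda_p|$ uniformly for all $p \geq 5$ --- particularly at the borderline prime $p = 5$, where $p - 2\sqrt{p} \approx 0.53$ leaves very little slack --- which requires an explicit evaluation of $\mu_p$ from \eqref{eisenstein class}, tracking the character values and local factors carefully. A secondary technical point is verifying that $C$ is a genuine $T(p^2)$-eigenform (rather than a sum), which follows from $F$ being the unique newform target of $\mathcal{S}_3$ on the relevant piece of $S_{3/2}(\Gamma_0(96),\chi)$.
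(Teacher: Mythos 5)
Your overall strategy---scale down by squares, then apply the $T(p^2)$ Hecke relation to the decomposition $\theta_Q = E + C$ and invoke the Hasse bound---is the same skeleton as the paper's proof, and your identity $r_Q(p^2m) = (\mu_p - \lambda_p)\,a_E(m)$ when $r_Q(m)=0$ is a clean way to package it. Where you genuinely diverge is in how the growth of the Eisenstein part is controlled: the paper never uses a Hecke eigenvalue for $E$ at all. Instead, when $r_Q(N)=0$ it observes $a_E(N) = \tfrac{3}{4}r_{Q'}(N)$ and $a_C(N) = -\tfrac{3}{4}r_{Q'}(N)$, counts automorphs to relate $r_{Q'}$ to the number $G$ of essentially distinct primitive representations by the genus, and then combines Jones' theorem $G(N)=\tfrac{1}{2}h(-4N)$ with the relation $h(-4Np^2) = \bigl(p - \leg{-4N}{p}\bigr)h(-4N)$ to get $r_{Q'}(Np^2)/r_{Q'}(N) \geq (p+1)/2$; comparing with $A(p)+1$ forces $A(p)\geq (p-1)/2$, which Hasse rules out for $p>17$, and the primes $5\leq p\leq 17$ are checked directly.

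Measured against that, your writeup has two genuine gaps. First, the linchpin inequality $\mu_p > |\lambda_p|$ is precisely what you flag as ``the main obstacle'' and leave unproved; without it the argument does not close. It is fillable---the genus Eisenstein series in weight $3/2$ is a $T(p^2)$-eigenform with eigenvalue $\mu_p = p+1$ for $p\nmid 96$, consistent with the class number relation above, so $\mu_p - \lambda_p \geq p+1-2\sqrt{p} = (\sqrt{p}-1)^2 > 0$ and your worry about the borderline prime $p=5$ evaporates---but you must actually establish that eigenvalue, either via the class number relation (as the paper effectively does) or by citing the standard result on weight-$3/2$ Eisenstein series. Second, you need $a_E(m)>0$, i.e.\ that every $m\equiv 11\pmod{24}$ is represented by the genus; you assert local admissibility without checking it, and because your reduction keeps $m = N/p^{e}$ rather than passing to the square-free kernel of $N$, you need this for non-square-free $m$, where formula \eqref{eisenstein class} does not apply. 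The paper sidesteps both points by reducing to square-free arguments only (if $N = N_0f^2$ with $N_0$ square-free and $r_Q(N_0)=0$, it suffices to represent $N_0p^2$ for a single prime $p\mid f$, since then $N = (N_0p^2)(f/p)^2$) and quoting Jones for $G(N_0) = \tfrac{1}{2}h(-4N_0) > 0$. If you adopt that reduction and justify $\mu_p = p+1$, your version actually improves on the paper's by eliminating the separate check of the primes $5\leq p\leq 17$.
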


\begin{proof}
First note that if $N=24n+35$ is not square-free, then it suffices to find $d^2 \mid N$ such that $r_Q(N/d^2)>0$ (since if $Q(x_1,x_2,x_3)=N/d^2$ then $Q(dx_1,dx_2,dx_3)=N$). Thus, it suffices to prove the following statement: if $N=24n+35$ is square-free and $r_Q(N)=0,$ then $r_Q(Np^2)>0$ for any prime $p\geq 5.$

As in Section \ref{section:initialcalcs}, consider $C(z) = \frac{3}{4}(\theta_{Q}(z) - \theta_{Q'}(z)) = \sum_{n\geq 1} a_C(n)q^{n} \in S_{3/2}(\Gamma_0(96)).$ Note that $C(z)$ is a Hecke eigenform and its Shimura lift is a multiple of the newform $F(z) = \sum_{n\geq 1}A(n)q^{n} \in S_{2}(\Gamma_0(24)).$ Thus for $p\geq 5$ it follows that $A(p)$ is the Hecke eigenvalue when $T(p^2)$ is applied to $C(z),$ and so
    $$A(p)a_C(n) = a_C(p^{2}n) + \leg{-n}{p}a_C(n) + pa_C(n/p^{2}).$$
Since $a_C(n) = \frac{3}{4}(r_{Q}(n) - r_{Q'}(n))$, it follows that for square-free $n$ we have
    $$r_{Q}(np^{2}) - r_{Q'}(np^{2}) = \left( A(p) - \leg{-n}{p}\right)(r_{Q}(n) - r_{Q'}(n)).$$

Now let $p\geq 5$ be prime and $N=24n+35$ be a square-free integer such that $r_{Q}(N) = 0$. Suppose for contradiction that $r_{Q}(Np^{2}) = 0$, so
    \begin{align}\label{A(p) bound}
    \frac{r_{Q'}(Np^{2})}{r_{Q'}(N)} = A(p) - \leg{-N}{p} \leq A(p) + 1.
    \end{align}
Since $N$ is square-free, we have that
\[r_{Q'}(Np^{2}) = R_{Q'}(Np^{2}) + R_{Q'}(N) = R_{Q'}(Np^{2}) + r_{Q'}(N)\]
and (using equation \eqref{eq:G})
\[4G(N) = \frac{1}{3}R_{Q}(N) + R_{Q'}(N) = \frac{1}{3}r_{Q}(N) + r_{Q'}(N) = r_{Q'}(N).\]
Also, since $Np^{2} \neq 0$, every primitive essentially distinct representation of $Np^{2}$ by $Q'$ has at least 2 different automorphs, whence $2G(Np^{2}) \leq R_{Q'}(Np^{2})$. Therefore,
    \begin{align}\label{bound on r-quot}
    \frac{r_{Q'}(Np^{2})}{r_{Q'}(N)} = 1 + \frac{R_{Q'}(Np^{2})}{r_{Q'}(N)} \geq 1 + \frac{2G(Np^{2})}{4G(N)} = 1 + \frac{G(Np^{2})}{2G(N)}.
    \end{align}
Using equation \eqref{eq:jones} along with Corollary 7.28 from \cite{Cox}, we get
    $$\frac{G(Np^{2})}{G(N)} = \frac{h(-4Np^{2})}{h(-4N)} = p - \leg{-4N}{p} \geq p - 1.$$
Substituting this into \eqref{bound on r-quot} yields
    $$\frac{r_{Q'}(Np^{2})}{r_{Q'}(N)} \geq \frac{p+1}{2}.$$
This coupled with \eqref{A(p) bound} tells us that $A(p) + 1 \geq (p+1)/2$, whence $A(p) \geq (p-1)/2$. This contradicts Hasse's bound $|A(p)| \leq 2\sqrt{p}$ for $p>17$. Finally, there are no primes $5\leq p\leq 17$ which satisfy $A(p) \geq (p-1)/2$. This completes the proof.
\end{proof}

\begin{proof}[Proof of Theorem \ref{maintechthm}]
Let $N=24n+35$ be square-free. It suffices to show that $r_Q(N)>0$ except when $n\in \{2,12,13,73\}$.

By considering the decomposition $\theta_Q = E(z) + C(z)$ and applying both equation \eqref{eisenstein class} and Theorem \ref{thm:waldspurger}, we can find constants $a,$ $b,$ and $d$ such that
\[r_Q(N) = ah(-bN) \pm dN^{1/4}L(E\otimes \chi_{-N}, 1)^{1/2}.\]
In fact, we have $a=3$, $b=1,$ and $d=1.63384...$ Dirichlet's class number formula (equation \eqref{DCNF}) gives us
\[h(-N) = \frac{1}{\pi}\sqrt{N}L(\chi_{-N}, 1),\]
so if $N$ is not represented by $Q$ then it must be that
\begin{align}\label{lower bound}
\frac{L(E\otimes \chi_{-N}, 1)^{1/2}}{L(\chi_{-N}, 1)} = \frac{a\sqrt{b}}{d\pi}N^{1/4} \geq 0.5844N^{1/4}.
\end{align}

On the other hand, Proposition \ref{prop:conductor} allows us to utilize work of Chandee (see Section 4 of \cite{Chandee}) to obtain the upper bound
\begin{align}\label{upper bound}
\frac{L(E\otimes \chi_{-N}, 1)^{1/2}}{L(\chi_{-N}, 1)} \leq 2.5889N^{0.14157}.
\end{align}
Equations \eqref{lower bound} and \eqref{upper bound} together tell us that $n \leq 916347.7794$. That is, if $r_{Q}(N) = 0$ then $n \leq 916347.7794$. Using Sage or Magma, we find that the only such $N = 24n+35$ correspond to $n\in \{2, 12, 13, 73\}$. This completes the proof.
\end{proof}

\bibliographystyle{alpha}
\bibliography{refs}

\end{document}